\DeclareMathOperator{\Tr}{Tr}
\DeclareMathOperator{\Gr}{Gr}
\renewcommand{\phi}[0]{\varphi}
\renewcommand{\theta}[0]{\vartheta}
\renewcommand{\epsilon}[0]{\varepsilon}
\newcommand{\Pro}{\text{$\mathbf{P}^1$}}
\newcommand{\F}{\text{$\mathbf{F}$}}
\newtheorem{theorem}{Theorem}[section]
\newtheorem{lemma}[theorem]{Lemma}
\theoremstyle{definition}
\theoremstyle{remark}
\newtheorem{remark}[theorem]{Remark}
\numberwithin{equation}{section}
\begin{document}

\bibliographystyle{amsalpha}

\date{}

\title[]
{Sequences of binary irreducible polynomials}

\author{S.~Ugolini}
\email{sugolini@gmail.com} 

\begin{abstract} 
In this paper we construct an infinite sequence of binary irreducible polynomials starting from any irreducible polynomial $f_0 \in \F_2 [x]$. If $f_0$ is of degree $n = 2^l \cdot m$, where $m$ is odd and $l$ is a non-negative integer, after an initial finite sequence of polynomials $f_0, f_1, \dots, f_{s}$, with $s \leq l+3$, the degree of $f_{i+1}$ is twice the degree of $f_i$ for any $i \geq s$.  
\end{abstract}

\maketitle
\section{Introduction}
Constructing binary irreducible polynomials of arbitrary large degree is of fundamental importance in many applications. If $f$ is a binary polynomial, namely $f \in \F_2 [x]$ where $\F_2$ is the field with two elements, and has degree $n$, then its $Q$-transform is the polynomial $f^Q (x) = x^n \cdot f(x+x^{-1})$ of degree $2n$.
The $Q$-transform of $f$ is a self-reciprocal polynomial, namely $f^Q$ is equal to its reciprocal polynomial (we remind that, if $g$ is a polynomial of degree $d$, then its reciprocal polynomial is $g^*(x) = x^d \cdot g({x}^{-1})$).

In \cite{var} the following result was proved. 
\begin{theorem}\label{var}
Let $f(x) = x^n + \dots + a_1 x + 1$ be an irreducible polynomial of $\F_2[x]$. Then $f^Q (x)$ is irreducible if and only if $a_1 =1$. 
\end{theorem}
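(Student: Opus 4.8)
The plan is to convert the irreducibility of $f^Q$ into a question about the field of definition of a single root, exploiting the explicit effect of the substitution $x \mapsto x + x^{-1}$. First I would fix a root $\alpha$ of $f$ in $\F_{2^n}$, so that $\F_2(\alpha) = \F_{2^n}$ and, since $f$ has constant term $1$, the norm $N_{\F_{2^n}/\F_2}(\alpha) = 1$. Because $f^Q(x) = x^n f(x+x^{-1})$ is monic of degree $2n$, any root $\beta$ of $f^Q$ satisfies $f(\beta + \beta^{-1}) = 0$, so $\beta + \beta^{-1}$ is a root of $f$; relabelling, I may take $\beta + \beta^{-1} = \alpha$, i.e. $\beta^2 + \alpha\beta + 1 = 0$ over $\F_{2^n}$. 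Thus every root of $f^Q$ lies in $\F_{2^{2n}}$, and the point is that $f^Q$ is irreducible exactly when such a $\beta$ has degree $2n$ over $\F_2$, equivalently when $\beta \notin \F_{2^n}$.

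The second step is to decide the membership $\beta \in \F_{2^n}$ by an Artin--Schreier computation. Dividing $\beta^2 + \alpha\beta + 1 = 0$ by $\alpha^2$ and setting $z = \beta/\alpha$ gives $z^2 + z = 1/\alpha^2$, which has a solution in $\F_{2^n}$ if and only if $\Tr_{\F_{2^n}/\F_2}(1/\alpha^2) = 0$. Since the trace is invariant under the Frobenius, $\Tr_{\F_{2^n}/\F_2}(1/\alpha^2) = \Tr_{\F_{2^n}/\F_2}(1/\alpha)$, so $\beta \in \F_{2^n}$ if and only if $\Tr_{\F_{2^n}/\F_2}(1/\alpha) = 0$.

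The third step is to identify this trace with the coefficient $a_1$. Writing $f(x) = \prod_i (x + \alpha^{2^i})$ over the conjugates of $\alpha$ and reading off the coefficient of $x$, I get $a_1 = \sum_i \prod_{j \neq i} \alpha^{2^j} = N_{\F_{2^n}/\F_2}(\alpha)\cdot\sum_i \alpha^{-2^i} = \Tr_{\F_{2^n}/\F_2}(1/\alpha)$, using $N_{\F_{2^n}/\F_2}(\alpha) = 1$. Chaining the three steps yields $\beta \in \F_{2^n} \iff a_1 = 0$, so $f^Q$ is reducible when $a_1 = 0$ (it then has an irreducible factor of degree $\le n$). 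When $a_1 = 1$ I would close the argument by checking that $\beta$ has degree exactly $2n$: from $\alpha = \beta + \beta^{-1} \in \F_2(\beta)$ one gets $\F_{2^n} \subseteq \F_2(\beta)$, and since $[\F_{2^n}(\beta):\F_{2^n}] = 2$ this forces $[\F_2(\beta):\F_2] = 2n$, so the monic $f^Q$ of degree $2n$ is the minimal polynomial of $\beta$ and hence irreducible.

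The hard part is not conceptual but the bookkeeping in the second and third steps: getting the Artin--Schreier normalisation right in characteristic $2$ and, above all, pinning down the exact identity $a_1 = \Tr_{\F_{2^n}/\F_2}(1/\alpha)$ via elementary symmetric functions. The one subtlety I would flag is the degenerate low-degree behaviour, where the coefficient named $a_1$ can coincide with the leading coefficient (for instance $n = 1$); I would therefore state the symmetric-function computation for $n \ge 2$ and dispose of the small cases by direct inspection.
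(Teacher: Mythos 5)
Your proof is correct, but there is nothing in the paper to compare it against: the paper states Theorem~\ref{var} as an imported result from \cite{var} and gives no proof of it at all, using it later only as a black box (in Lemma~\ref{meyn2} and Theorems~\ref{thm_B}--\ref{thm_D}). Your argument is essentially the classical Artin--Schreier proof of Varshamov's criterion, and it checks out step by step: every root $\beta$ of $f^Q$ satisfies $\beta^2+\alpha'\beta+1=0$ for some conjugate $\alpha'$ of $\alpha$; the substitution $z=\beta/\alpha'$ reduces solvability in $\F_{2^n}$ to $\Tr_n\bigl((\alpha')^{-2}\bigr)=\Tr_n\bigl((\alpha')^{-1}\bigr)=\Tr_n(\alpha^{-1})$, which is conjugation-invariant, so all roots of $f^Q$ behave uniformly; and the symmetric-function identity $a_1=\Tr_n(\alpha^{-1})$ (valid for $n\geq 2$, using $a_0=1$ so the norm of $\alpha$ is $1$) is exactly the fact the paper itself records without proof in its Background section. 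Your tower argument for the case $a_1=1$ is also right: $\alpha=\beta+\beta^{-1}\in\F_2(\beta)$ forces $\F_{2^n}\subseteq\F_2(\beta)$, so $\beta\notin\F_{2^n}$ gives $[\F_2(\beta):\F_2]=2n$ and the monic degree-$2n$ polynomial $f^Q$ is the minimal polynomial of $\beta$. Two cosmetic points you may as well make explicit: $\beta\neq 0$ (the constant term of $f^Q$ is $1$, since the only contribution to degree $0$ in $\sum_k a_k x^{n-k}(x^2+1)^k$ comes from $k=n$), which legitimizes writing $\beta+\beta^{-1}$; and your handling of $n=1$ by inspection is the right call, since there $f(x)=x+1$, the ``$a_1$'' of the statement is the leading coefficient $1$, and $f^Q(x)=x^2+x+1$ is indeed irreducible.
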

Later Meyn proved in \cite{mey} the following result.
\begin{theorem}
If $f(x) =x^n +a_{n-1} x^{n-1} + \dots + a_1 x + 1$ is an irreducible polynomial of degree $n$ over $\F_2$ such that $a_{n-1} = a_1 = 1$, then $f^Q (x) = x^{2n} + b_{2n-1} x^{2n-1} + \dots + b_1 x + 1$ is a self-reciprocal irreducible polynomial of $\F_2 [x]$ of degree $2n$. 
\end{theorem}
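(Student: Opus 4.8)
The plan is to deduce the three assertions --- irreducibility, self-reciprocality, and the shape of the coefficients --- from the structure of the $Q$-transform together with Theorem \ref{var}. First I would dispose of irreducibility: since by hypothesis $a_1 = 1$, Theorem \ref{var} applies directly and guarantees that $f^Q$ is irreducible of degree $2n$. Thus the extra hypothesis $a_{n-1} = 1$ plays no role in irreducibility; its purpose, as I will show, is to control the subdominant coefficient so that $f^Q$ again satisfies the hypotheses of the present theorem, which is precisely what makes the construction iterable.

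Next I would verify that $f^Q$ is self-reciprocal by a direct manipulation. Writing $f^Q(x) = x^n f(x + x^{-1})$, one computes
\[
(f^Q)^*(x) = x^{2n} f^Q(x^{-1}) = x^{2n}\cdot x^{-n} f(x^{-1} + x) = x^n f(x + x^{-1}) = f^Q(x),
\]
so $f^Q = (f^Q)^*$. In particular, writing $f^Q(x) = \sum_{i=0}^{2n} b_i x^i$, self-reciprocality forces $b_i = b_{2n-i}$ for all $i$; combined with the leading coefficient being $1$, this already yields the constant term $b_0 = 1$ and the relation $b_1 = b_{2n-1}$.

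It then remains to pin down $b_{2n-1}$, and this coefficient extraction is the only genuine computation. Using $x^n(x + x^{-1})^j = x^{n-j}(x^2+1)^j$, I would rewrite
\[
f^Q(x) = \sum_{j=0}^{n} a_j\, x^{n-j}(x^2+1)^j
\]
and read off the coefficient of $x^{2n-1}$. In the $j$-th summand every exponent that occurs has the parity of $n+j$, so only the index $j = n-1$ can contribute to the odd exponent $2n-1$; that contribution is $a_{n-1}\binom{n-1}{n-1} = a_{n-1}$. Hence $b_{2n-1} = a_{n-1}$ over $\F_2$, and the hypothesis $a_{n-1} = 1$ gives $b_{2n-1} = 1$, whence $b_1 = b_{2n-1} = 1$ by the previous paragraph.

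The point to get right --- rather than a genuine obstacle --- is the bookkeeping in this last step: one must track the parity of the exponents appearing in each $x^{n-j}(x^2+1)^j$ in order to see that all but one summand are irrelevant to $b_{2n-1}$, and then observe that the surviving term is exactly $a_{n-1}$. Assembling the three pieces shows that $f^Q$ is a self-reciprocal irreducible polynomial of degree $2n$ whose subdominant and second coefficients both equal $1$, so that $f^Q$ is again of the form required to reapply the theorem.
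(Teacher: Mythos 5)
Your proof is correct and takes essentially the same route the paper itself relies on: the paper cites this result from Meyn without reproving it, but your argument is exactly the combination of Theorem \ref{var} (irreducibility), the standard identity $(f^Q)^* = f^Q$ stated in the paper's introduction, and the coefficient extraction the paper performs in Lemma \ref{Q_n-1_1}. Your parity bookkeeping for $b_{2n-1}$ (implicitly together with the degree bound $n+j \ge 2n-1$, which is what actually rules out $j \le n-2$) matches the paper's observation that only the expansions of $(x+x^{-1})^n$ and $(x+x^{-1})^{n-1}$ can affect $b_{2n-1}$ and $b_1$.
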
 
We classify any irreducible polynomial $f(x) = x^n + a_{n-1} x^{n-1} + \dots + a_1 x + a_0 \in \F_2 [x]$ as follows. 
\begin{itemize}
\item $f$ is of type $(A, n)$ if $a_{n-1} = a_{1} = 1$.
\item $f$ is of type $(B, n)$ if $a_{n-1} = 0$ and  $a_{1} = 1$.
\item $f$ is of type $(C, n)$ if $a_{n-1} = 1$ and $a_1 = 0$.
\item $f$ is of type $(D, n)$ if $a_{n-1} = a_1 = 0$.
\end{itemize} 

If $n$ is any positive integer, then we can write $n = 2^l \cdot m$, for some odd integer $m$ and non-negative integer $l$. As pointed out by Meyn, if $f_0$ is of type $(A,n)$, then by means of repeated applications of the $Q$-transform we can construct an infinite sequence of irreducible polynomials setting $f_{i+1} := f_i^Q$, for $i \geq 0$. We notice that, for any $i$, the degree of $f_{i+1}$ is twice the degree of $f_i$. 

One can wonder what happens if $f_0$ is not of type $(A,n)$. In this paper we will prove that, if $f_0$ is of type $(B, n), (C,n )$ or $(D,n)$, then it is possible to construct an infinite sequence $\{f_i \}_{i \geq 0}$ of binary irreducible polynomials such that, after an initial finite sequence $f_0$, $f_1$, $ \dots,$ $f_s$ with $s \leq l+3$, for $i \geq s$ the degree of $f_{i+1}$ is twice the degree of $f_i$ (see the Subsections \ref{procedure} and \ref{conclusions} ). To prove this fact we will rely upon the properties of the graphs associated with the map $\theta(x) = x+x^{-1}$ over finite fields of characteristic two (see \cite{SU2}) and some properties of the $Q$-transform.

\section{Background}
Given a positive integer $n$, it is possible to construct a graph associated with the map $\theta$ over $\Pro (\F_{2^n}) = \F_{2^n} \cup \{\infty \}$, being $\F_{2^n}$ the field with $2^n$ elements. If $\alpha \in \Pro (\F_{2^n})$, then
\begin{displaymath}
\theta (\alpha) = \left\{
\begin{array}{lll}
\infty & \text{if $\alpha = 0$ or $\infty$}\\
\alpha + \alpha^{-1} & \textrm{otherwise}.
\end{array}
\right.
\end{displaymath}
The vertices of the graph are labelled by the elements of $\Pro (\F_{2^n})$ and an arrow connects the vertex $\alpha$ to the vertex $\beta$ if $\beta = \theta(\alpha)$. We will denote the graph constructed in such a way by $\Gr_n$. 
The elements of $\Pro (\F_{2^n})$ can be partitioned in two sets,
\begin{eqnarray*}
A_n &  = & \{ \alpha \in \F_{2^n}^* : \Tr_{n} (\alpha) = \Tr_{n} (\alpha^{-1}) \} \cup \{ 0, \infty \} \\
B_n & = & \{ \alpha \in \F_{2^n}^* : \Tr_{n} ( \alpha ) \not = \Tr_{n} (\alpha^{-1})  \},
\end{eqnarray*} 
where $\Tr_n (\alpha) = \displaystyle\sum_{i=0}^{n-1} \alpha^{2^i}$ denotes the absolute trace of $\alpha \in \F_{2^n}$.
We notice that, if $f(x) = x^n + a_{n-1} x^{n-1} + \dots + a_1 x + a_0 \in \F_2 [x]$ is the minimal polynomial of an element $\alpha \in \F_{2^n}$, then $a_{n-1} = \Tr_n (\alpha)$. Moreover, if $n > 1$, then $a_1 = \Tr_n (\alpha^{-1})$.  

In Section 4 of \cite{SU2} the structure of the graph $\Gr_n$ is analysed in depth. If $\alpha$ is an element of $\Pro (\F_{2^n})$ such that $\theta^k (\alpha) = \alpha$ for some positive integer $k$, then $\alpha$ is $\theta$-periodic. In the case an element $\alpha$ is not $\theta$-periodic, it is preperiodic, namely some iterate of $\alpha$ is $\theta$-periodic.

Here we summarize the properties of the graph $\Gr_n$ we will use later.
\begin{itemize}
\item Two or zero arrows enter a vertex. 
\item The elements belonging to a connected component of the graph are all in $A_n$ or all in $B_n$. 
\item A connected component having elements in $A_n$ is formed by a cycle, whose vertices are roots of reversed binary trees of depth $l+2$, where $2^l$ is the greatest power of $2$ which divides $n$. Moreover, any vertex belonging to a non-zero level smaller than $l+2$ of a tree has exactly two children.
\end{itemize}

\section{Construction of sequences of binary irreducible polynomials}
The following, which is a special case of Lemma 4 in \cite{mey}, holds.

\begin{lemma}\label{meyn2}
If $f$ is an irreducible polynomial of $\F_2 [x]$ of degree $n > 1$ then either $f^Q$ is a self-reciprocal irreducible polynomial of degree $2n$ or $f^Q$ is the product of a reciprocal pair of irreducible polynomials of degree $n$ which are not self-reciprocal.
\end{lemma}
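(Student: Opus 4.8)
The plan is to analyze $f^Q$ via its roots in an algebraic closure of $\F_2$. Let $\alpha_1,\dots,\alpha_n$ be the roots of the irreducible polynomial $f$, which all lie in $\F_{2^n}$ and are conjugate under the Frobenius map. Since $f^Q(x) = x^n f(x+x^{-1})$, a quick computation shows that $\beta$ is a root of $f^Q$ precisely when $\beta \neq 0$ and $\theta(\beta) = \beta + \beta^{-1}$ is a root of $f$; that is, $f^Q$ has degree $2n$ and its roots are exactly the $\theta$-preimages of the $\alpha_i$. Because two elements $\beta$ and $\beta^{-1}$ share the same image under $\theta$, the roots come in reciprocal pairs $\{\beta,\beta^{-1}\}$, and each $\alpha_i$ receives exactly two preimages (these being distinct in characteristic two when $\alpha_i \neq 0$, since $x^2 + \alpha_i x + 1$ is separable). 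This immediately shows $f^Q$ is a self-reciprocal polynomial of degree $2n$, recovering the structure already noted in the introduction.

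Next I would use the fact that $f$ is irreducible to pin down the possible factorization types of $f^Q$. A root $\beta$ of $f^Q$ satisfies $\beta^2 + \alpha\beta + 1 = 0$ for some root $\alpha = \theta(\beta)$ of $f$, so $\beta$ generates an extension of $\F_{2^n}$ of degree $1$ or $2$. Hence every root of $f^Q$ lies in $\F_{2^{2n}}$, and $f^Q$ factors into irreducible polynomials each of degree dividing $2n$. The key observation is that the degree of the irreducible factor containing $\beta$ equals $[\F_2(\beta):\F_2]$, which is either $n$ (if $\beta \in \F_{2^n}$) or $2n$ (if $\beta \notin \F_{2^n}$). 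The decisive question is therefore whether the two preimages of a fixed root $\alpha$ lie in $\F_{2^n}$ or not; this is governed by whether $x^2 + \alpha x + 1$ splits over $\F_{2^n}$, which by the standard Artin–Schreier criterion in characteristic two is equivalent to $\Tr_n(\alpha^{-1}) = 0$ (after normalizing, the relevant trace condition on $\alpha^{-1}$, since $\beta + \beta^{-1} = \alpha$ rewrites as $(\beta\alpha^{-1})^2 + (\beta\alpha^{-1}) = \alpha^{-2}$).

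The argument then splits into two cases. If $\beta \in \F_{2^n}$, then the whole Frobenius orbit of $\beta$ has size $n$ and produces an irreducible factor of degree $n$; its reciprocal $\beta^{-1}$ generates the conjugate factor, and these two factors are distinct because a self-reciprocal irreducible polynomial over $\F_2$ has even degree, so a degree-$n$ factor of $f^Q$ cannot be self-reciprocal unless the two preimage-orbits coincide—which would force $\beta = \beta^{-1}$ for some conjugate, i.e. $\beta \in \F_2$, excluded since $n>1$ guarantees $\alpha \notin \F_2$ has no square-root-of-unity preimage. Thus $f^Q$ is the product of a reciprocal pair of non-self-reciprocal irreducibles of degree $n$. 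In the complementary case $\beta \notin \F_{2^n}$, the Frobenius orbit of $\beta$ has size $2n$, so the single orbit accounts for all $2n$ roots and $f^Q$ is itself irreducible; it is self-reciprocal by the first paragraph. These two cases are exhaustive and mutually exclusive, giving the dichotomy.

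The main obstacle I anticipate is the careful bookkeeping in the degree-$n$ case: verifying that the two degree-$n$ factors are genuinely a \emph{reciprocal pair} and not a single repeated factor, and that neither is self-reciprocal. This requires tracking how Frobenius acts on the reciprocal involution $\beta \mapsto \beta^{-1}$ and ruling out the degenerate possibility that an orbit is stable under inversion. The hypothesis $n>1$ and the irreducibility of $f$ are exactly what one must exploit here; everything else reduces to separability and the Artin–Schreier splitting criterion, which are routine.
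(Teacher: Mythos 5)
Your overall strategy---identifying the roots of $f^Q$ with the $\theta$-preimages of the roots of $f$, using separability of $x^2+\alpha x+1$ for $\alpha\neq 0$, and the Artin--Schreier criterion $\Tr_n(\alpha^{-1})=0$ to decide whether the preimages stay in $\F_{2^n}$---is the standard root-theoretic proof of this fact (the paper itself gives no proof, citing the lemma as a special case of Meyn's Lemma 4), and your degree-$2n$ branch is complete: $\F_2(\beta)\supseteq\F_2(\alpha)=\F_{2^n}$ together with $\beta\notin\F_{2^n}$ forces $[\F_2(\beta):\F_2]=2n$, so $f^Q$ is the minimal polynomial of $\beta$ and is self-reciprocal by your first paragraph. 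The genuine gap sits in the split case, exactly where you anticipated trouble: you claim that if the degree-$n$ factor $g$ (the minimal polynomial of $\beta\in\F_{2^n}$) coincided with $g^*$, this ``would force $\beta=\beta^{-1}$ for some conjugate.'' That implication is false. The equality $g=g^*$ means $\beta^{-1}=\beta^{2^k}$ for some $0\leq k<n$; inversion then acts on the Frobenius $n$-cycle as the rotation by $k$, and since inversion is an involution, $2k\equiv 0 \pmod{n}$. So either $k=0$ (your case, giving $\beta=1$, hence $\alpha=\theta(1)=0$, a contradiction) or $k=n/2$ with $n$ even---a fixed-point-free rotation, with no self-inverse element anywhere in the orbit. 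Your other safeguard, that a self-reciprocal irreducible over $\F_2$ has even degree, disposes of the problem only for odd $n$; for even $n$ it excludes nothing, and irreducible self-reciprocal polynomials of even degree whose roots pair off under inversion without fixed points are abundant (e.g.\ $x^4+x^3+x^2+x+1$, whose roots are the primitive fifth roots of unity)---indeed they are exactly what the other branch of this very lemma produces. So as written your proof does not establish that the two degree-$n$ factors are distinct when $n$ is even.

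The gap closes in one line, and the missing ingredient is the identity $\theta(\beta)=\theta(\beta^{-1})$ combined with the fact that $\theta$ commutes with the Frobenius map: if $\beta^{-1}=\beta^{2^{n/2}}$, then $\alpha=\theta(\beta)=\theta(\beta^{-1})=\theta\bigl(\beta^{2^{n/2}}\bigr)=\theta(\beta)^{2^{n/2}}=\alpha^{2^{n/2}}$, whence $\alpha\in\F_{2^{n/2}}$, contradicting that $f$ is irreducible of degree $n$. (Equivalently: a self-reciprocal irreducible factor of degree $n$ would send its roots under $\theta$ to elements of degree at most $n/2$ over $\F_2$, whereas those images must be roots of $f$.) With this patch the argument is sound: $g\neq g^*$, both divide the separable polynomial $f^Q$ of degree $2n$, hence $f^Q=g\cdot g^*$, and neither factor is self-reciprocal since $(g^*)^*=g\neq g^*$. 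One further small point worth making explicit: your case division over a fixed $\beta$ is legitimate without checking the other roots of $f$, precisely because in each case the factorization of $f^Q$ is already forced by degree counting, so you never need the (true, but unstated) fact that $\Tr_n(\alpha_i^{-1})$ is the same for all conjugates $\alpha_i$.
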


We prove some technical lemmas.

\begin{lemma}\label{g_splits}
Suppose that $g(x) = x^{2n} + b_{2n-1} x^{2n-1} + \dots + b_1 x + b_0$ is a self-reciprocal binary polynomial of degree $2n$ which factors as $g(x) = f(x) \cdot f^* (x)$, where $f(x) = x^n + a_{n-1} x^{n-1} + \dots + a_1 x + a_0$ is irreducible in $\F_2 [x]$. Then one of the following holds:
\begin{itemize}
\item if $b_{2n-1} = b_1 = 0$, then $a_{n-1} = a_1$;
\item if $b_{2n-1} = b_1 = 1$, then $f$ is of type $(B,n)$ and $f^*$ is of type $(C,n)$ or viceversa.
\end{itemize}
\end{lemma}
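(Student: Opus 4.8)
The plan is to reduce both assertions to a direct computation of the two extreme non-leading coefficients of $g$ in terms of the coefficients of $f$. First I would record that, since $f$ is irreducible of degree $n$ (with $n \geq 2$, as throughout the surrounding discussion), its constant term cannot vanish, so $a_0 = 1$ in $\F_2$. Consequently the reciprocal $f^*(x) = x^n f(x^{-1})$ is itself monic of degree $n$, and its coefficient string is exactly the reversal of that of $f$, namely $f^*(x) = x^n + a_1 x^{n-1} + \dots + a_{n-1} x + 1$. In particular the coefficient of $x^{n-1}$ in $f^*$ is $a_1$ and the coefficient of $x$ in $f^*$ is $a_{n-1}$; this is the observation I will later feed into the type classification of $f^*$.

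Next I would extract $b_{2n-1}$ and $b_1$ from the product $g = f \cdot f^*$. The coefficient $b_{2n-1}$ of $x^{2n-1}$ arises only from multiplying the leading term of one factor by the next-to-leading term of the other, which gives $b_{2n-1} = a_{n-1} + a_1$. Symmetrically, the coefficient $b_1$ of $x$ arises only from the constant term of one factor times the linear term of the other, giving $b_1 = a_{n-1} + a_1$ as well. (That $b_{2n-1} = b_1$ is forced a priori by the self-reciprocity of $g$, so this serves as a consistency check rather than new information.) Thus both hypotheses translate into a single statement about the value of $a_{n-1} + a_1 \in \F_2$.

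The two bullet points then follow by a short case analysis. If $b_{2n-1} = b_1 = 0$ then $a_{n-1} + a_1 = 0$, i.e. $a_{n-1} = a_1$, which is exactly the first claim. If instead $b_{2n-1} = b_1 = 1$ then $a_{n-1} + a_1 = 1$, so precisely one of $a_{n-1}, a_1$ equals $1$. In the subcase $a_{n-1} = 0,\ a_1 = 1$ the polynomial $f$ is of type $(B,n)$ by definition, while the coefficients of $f^*$ computed above (namely $a_1 = 1$ at $x^{n-1}$ and $a_{n-1} = 0$ at $x$) show $f^*$ is of type $(C,n)$; the subcase $a_{n-1} = 1,\ a_1 = 0$ is identical with the roles of $f$ and $f^*$ interchanged, yielding the ``$(B,n)/(C,n)$ or vice versa'' conclusion.

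Since the whole argument is coefficient bookkeeping over $\F_2$, I do not expect any genuine obstacle. The only points needing minor care are justifying $a_0 = 1$ (so that $f^*$ is monic of degree $n$ and the factorization $g = f f^*$ really has matching leading terms), and correctly reversing the coefficient string of $f$ before reading off the type of $f^*$, since a sign slip there would exchange the roles of $(B,n)$ and $(C,n)$.
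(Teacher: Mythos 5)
Your proof is correct and follows essentially the same route as the paper's: expand $f \cdot f^*$, observe that the coefficients of $x^{2n-1}$ and $x$ both equal $a_{n-1} + a_1$, and equate these to $b_{2n-1}$ and $b_1$. Your added care in justifying $a_0 = 1$ (so that $f^*$ is monic with reversed coefficient string) and the explicit case analysis for reading off the types $(B,n)$ and $(C,n)$ are details the paper leaves implicit, but the argument is the same.
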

\begin{proof}
Expanding the product $f(x) \cdot f^* (x)$ we get a polynomial whose coefficients of the terms of degree $1$ and $2n-1$ are both equal to $a_{n-1} + a_1$. The thesis follows equating the coefficients of the terms of degrees $2n-1$ and $1$ of $f(x) \cdot f^* (x)$ to $b_{2n-1}$ and $b_1$ respectively.
\end{proof}

\begin{lemma}\label{Q_n-1_1}
Let $f(x)=x^n+a_{n-1} x^{n-1} + \dots + a_1 x+ a_0$ be an irreducible binary polynomial of degree $n$ and $g(x) = f^Q (x) = x^{2n}+ b_{2n-1} x^{2n-1} + \dots + b_1 x + b_0$ its $Q$-transform. The following holds.
\begin{itemize}
\item If $a_{n-1} =1$, then $b_{2n-1} = b_1 = 1$.
\item If $a_{n-1} =0$, then $b_{2n-1} = b_1 = 0$.
\end{itemize}
\end{lemma}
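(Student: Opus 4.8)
The plan is to reduce the two assertions to a single coefficient computation and then extract that coefficient directly from the defining expansion of the $Q$-transform. Since $g = f^Q$ is self-reciprocal (as recalled in the introduction), its coefficients satisfy $b_i = b_{2n-i}$ for every $i$; in particular $b_1 = b_{2n-1}$. Hence it suffices to compute the single coefficient $b_{2n-1}$, the coefficient of the subleading term, and to show that it equals $a_{n-1}$. Both bullet points then follow at once.

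To compute $b_{2n-1}$ I would expand the definition. Writing $f(x) = \sum_{k=0}^n a_k x^k$ with $a_n = 1$, substitution of $x + x^{-1}$ gives
\[
g(x) = x^n f(x + x^{-1}) = \sum_{k=0}^n a_k\, x^n (x + x^{-1})^k = \sum_{k=0}^n a_k\, x^{n-k}(x^2+1)^k .
\]
The $k$-th summand is a polynomial of degree $(n-k) + 2k = n+k$, so only the summands with $k \geq n-1$ can reach degree $2n-1$. The decisive observation is that, in characteristic two, $(x^2+1)^k$ is a polynomial in $x^2$ and therefore contains only even powers of $x$. Consequently the summand $a_k\, x^{n-k}(x^2+1)^k$ contains only monomials whose exponent has the same parity as $n-k$. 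For $k = n$ the offset $n-k = 0$ is even, so this term (which is $(x^2+1)^n$) contributes to the leading coefficient $b_{2n}=1$ but not to the odd-degree coefficient $b_{2n-1}$. For $k = n-1$ the offset $n-k = 1$ is odd, and the top monomial of $a_{n-1}\, x(x^2+1)^{n-1}$ is $a_{n-1}\, x \cdot x^{2(n-1)} = a_{n-1} x^{2n-1}$; no summand with $k \leq n-2$ reaches degree $2n-1$. Therefore $b_{2n-1} = a_{n-1}$, and combined with $b_1 = b_{2n-1}$ this yields both cases of the statement.

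This argument is essentially a matter of careful bookkeeping, so I do not expect a genuine obstacle; the only point requiring attention is to isolate, among all the summands, exactly those that can contribute to the coefficient of $x^{2n-1}$. The parity remark about $(x^2+1)^k$ is what makes this clean, since it immediately rules out the $k = n$ summand (the only other one of sufficiently high degree) without any binomial computation. Invoking the self-reciprocity of $f^Q$ at the outset further halves the work by removing the need to treat $b_1$ separately.
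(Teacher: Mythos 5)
Your proof is correct and takes essentially the same route as the paper's: both extract the relevant coefficients from the expansion $g(x)=\sum_{k} a_k\, x^{n-k}(x^2+1)^k$, noting that only the top summands can contribute and that the $k=n$ term $(x^2+1)^n$ has only even-degree monomials, so everything reduces to $a_{n-1}\,x(x^2+1)^{n-1}$ and $b_{2n-1}=b_1=a_{n-1}$. Your only deviation is cosmetic: you invoke the self-reciprocity of $f^Q$ to get $b_1=b_{2n-1}$, whereas the paper reads both coefficients off the same expansion directly.
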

\begin{proof}
We notice that, if $k$ is a positive integer, then in the expansion of the term $(x+x^{-1})^k$ only monomials $x^e$, with $-k \leq e \leq k$, appear. Therefore, the coefficients $b_{2n-1}$ and $b_1$ of degrees $2n-1$ and $1$ of $g$ are affected only by the expansions of $(x+x^{-1})^n$ and $(x+x^{-1})^{n-1}$.

If $a_{n-1} = 1$, then
\begin{eqnarray*}
g(x) & = & x^n \cdot \left[(x+x^{-1})^n + (x+x^{-1})^{n-1} + \dots + a_0 \right].
\end{eqnarray*} 
Hence, $g(x)  =  x^{2n} + x^{2n-1} + \dots + x + 1$, namely $b_{2n-1} = b_1 =1$.

Viceversa, if  $a_{n-1} = 0$, then $g(x)  =  x^{2n} + 0 \cdot x^{2n-1} + \dots + 0 \cdot x + 1$, 
namely $b_{2n-1} = b_1 = 0$.
\end{proof}

\begin{lemma}\label{root_Q}
If $f$ is a binary irreducible polynomial of degree $n$ with a root $\alpha \in \F_{2^{n}}$ and $\theta(\beta) = \alpha$ for some $\beta \in \F_{2^{2n}}$, then $\beta$ is root of $f^Q$.
\end{lemma}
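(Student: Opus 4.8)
The plan is to reduce the statement to the defining identity $f^Q(x) = x^n f(x + x^{-1})$ together with a direct substitution. First I would check that $\beta$ is a legitimate point at which this substitution makes sense. Since $\theta(\beta) = \alpha \in \F_{2^n}$ is a finite element, while $\theta$ sends both $0$ and $\infty$ to $\infty$, the element $\beta$ cannot be $0$ or $\infty$; hence $\beta \in \F_{2^{2n}}^*$, so $\beta^{-1}$ is defined and $\beta + \beta^{-1} = \theta(\beta) = \alpha$.

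Next I would record that $f^Q(x) = x^n f(x + x^{-1})$ is a genuine polynomial in $x$, not merely a rational expression. Writing $f(x) = \sum_{k=0}^{n} a_k x^k$ with $a_n = 1$, each contribution $x^n (x + x^{-1})^k$ with $0 \le k \le n$ equals $x^{n-k}(x^2 + 1)^k$, a polynomial in $x$ with no negative powers. Consequently the polynomial $f^Q$ and the rational expression $x^n f(x + x^{-1})$ agree at every nonzero argument, and in particular at $\beta$.

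Finally I would evaluate at $x = \beta$. Since $\beta \neq 0$, we obtain $f^Q(\beta) = \beta^n f(\beta + \beta^{-1}) = \beta^n f(\alpha)$; as $\alpha$ is a root of $f$ we have $f(\alpha) = 0$, so $f^Q(\beta) = \beta^n \cdot 0 = 0$. Therefore $\beta$ is a root of $f^Q$, as claimed.

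The computation itself is immediate; the only place requiring care is the preliminary observation that $\beta$ is nonzero, which is precisely what legitimizes the substitution into $x^n f(x + x^{-1})$ and guarantees that $\beta^{-1}$ makes sense. I expect no genuine obstacle beyond this bookkeeping, so the whole argument amounts to a single evaluation once the domain issue for $\beta$ has been settled.
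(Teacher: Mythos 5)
Your proof is correct and follows the same route as the paper's one-line argument: substitute $x=\beta$ into $f^Q(x)=x^n f(x+x^{-1})$ and use $f(\alpha)=0$. The extra bookkeeping you add---that $\beta\neq 0,\infty$ so the substitution is legitimate, and that the polynomial $f^Q$ agrees with the rational expression at nonzero arguments---is sound and merely makes explicit what the paper leaves implicit.
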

\begin{proof}
Since $f(\alpha) = 0$, then $f^Q (\beta) = \beta^n \cdot f(\beta + {\beta}^{-1}) = \beta^n  \cdot f(\alpha) = 0$.
\end{proof}

Now we prove some theorems which relate the types of a polynomial and its $Q$-transform.

\begin{theorem}\label{thm_B}
If $f$ is a polynomial of type $(B,n)$, for some integer $n$ greater than $1$, then $f^Q$ is a polynomial of type $(D,2n)$. If is of type $(B,1)$, then $f(x)=x$ and $f^Q$ splits as $f^Q (x) = (x+1)^2$. 
\end{theorem}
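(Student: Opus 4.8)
The plan is to split the argument according to whether $n>1$ or $n=1$, and in the principal case $n>1$ to combine Theorem~\ref{var}, which controls irreducibility of the $Q$-transform, with Lemma~\ref{Q_n-1_1}, which controls the two coefficients $b_{2n-1}$ and $b_1$. Recall that asserting ``$f^Q$ is of type $(D,2n)$'' means precisely that $f^Q$ is irreducible of degree $2n$ with $b_{2n-1}=b_1=0$, so there are exactly two things to check: irreducibility and the vanishing of those two coefficients.

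First I would treat the case $n>1$. Since $f$ is irreducible of degree $n>1$, its constant term cannot vanish (otherwise $x\mid f$ would contradict irreducibility), so $a_0=1$ and $f$ has exactly the shape $x^n+\dots+a_1x+1$ required by Theorem~\ref{var}. Because $f$ is of type $(B,n)$ we have $a_1=1$, and Theorem~\ref{var} then yields at once that $f^Q$ is irreducible of degree $2n$. For the coefficients I would invoke the second bullet of Lemma~\ref{Q_n-1_1}: type $(B,n)$ also gives $a_{n-1}=0$, whence $b_{2n-1}=b_1=0$. Together these two facts say exactly that $f^Q$ is of type $(D,2n)$, which settles this case.

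The remaining case $n=1$ I would dispose of by direct computation. For $n=1$ the polynomial is $f(x)=x+a_0$ with $a_{n-1}=a_0$, so the defining condition $a_{n-1}=0$ of type $(B,1)$ forces $f(x)=x$, as the statement records. Then
\begin{displaymath}
f^Q(x)=x\cdot f(x+x^{-1})=x\,(x+x^{-1})=x^2+1=(x+1)^2
\end{displaymath}
over $\F_2$, which is the asserted factorisation.

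I do not expect a genuine obstacle here: the content lies entirely in correctly matching the hypotheses of the imported results. The one point demanding care is the applicability of Theorem~\ref{var}, namely that the constant term of $f$ equals $1$; this is immediate from irreducibility as soon as $n>1$, but it is precisely the reason the degree-one situation must be separated out, since there $f^Q$ is reducible and the type classification, which presupposes irreducibility, no longer applies.
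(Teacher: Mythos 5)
Your proposal is correct and follows essentially the same route as the paper: for $n>1$ it applies Theorem~\ref{var} to get irreducibility of $f^Q$ and Lemma~\ref{Q_n-1_1} to get $b_{2n-1}=b_1=0$, and it handles $n=1$ by the same direct computation $f^Q(x)=x^2+1=(x+1)^2$. The only difference is that you spell out the implicit detail $a_0=1$ needed to invoke Theorem~\ref{var}, which the paper leaves unstated.
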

\begin{proof}
If $n>1$, then by Theorem \ref{var} the polynomial $f^Q$ is irreducible of degree $2n$. The conclusion follows from Lemma \ref{Q_n-1_1}.

If $n =1$, then $f(x) = x$ and $f^Q(x) = x^2+1 = (x+1)^2$.
\end{proof}

\begin{theorem}\label{thm_C}
If $f$ is a polynomial of type $(C,n)$, then $f^Q$ can be factored into the product of a reciprocal pair of distinct irreducible polynomials $g_1, g_2$ of degree $n$. Up to renaming, $g_1$ is of type $(B,n)$, while $g_2$ is of type $(C,n)$. 
\end{theorem}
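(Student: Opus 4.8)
The plan is to chain together Theorem \ref{var}, Lemma \ref{meyn2}, Lemma \ref{Q_n-1_1} and Lemma \ref{g_splits}, since between them these four results already control every piece of information the statement asks for; the whole argument is essentially a matter of invoking them with the right hypotheses. Note that we may assume $n>1$, since for $n=1$ the only irreducible polynomials are $x$ and $x+1$, and neither can be of type $(C,1)$ (the coefficient $a_1$ coincides with the leading coefficient, which is $1$), so there is nothing to prove.

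First I would use the defining property of type $(C,n)$, namely $a_1 = 0$, together with Theorem \ref{var} to deduce that $f^Q$ is not irreducible. Lemma \ref{meyn2} then forces $f^Q$ to be the product of a reciprocal pair of irreducible polynomials $g_1, g_2$ of degree $n$ which are not self-reciprocal. Writing $g_2 = g_1^*$, the fact that neither factor equals its own reciprocal guarantees $g_1 \neq g_1^*$, which is exactly the distinctness claim, and degree $n$ is recorded directly by Lemma \ref{meyn2}.

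Next I would pin down the low-order coefficients of $f^Q$. Since $f$ is of type $(C,n)$ we also have $a_{n-1} = 1$, so Lemma \ref{Q_n-1_1} tells me that the $Q$-transform $g := f^Q = x^{2n} + b_{2n-1}x^{2n-1} + \dots + b_1 x + b_0$ satisfies $b_{2n-1} = b_1 = 1$. I would then feed the factorization $g = g_1 \cdot g_1^*$ into Lemma \ref{g_splits}, taking the irreducible factor denoted $f$ in that lemma to be $g_1$. Because $b_{2n-1} = b_1 = 1$, the second alternative of Lemma \ref{g_splits} applies and yields that one of $g_1, g_1^*$ is of type $(B,n)$ and the other of type $(C,n)$; renaming if necessary gives the stated conclusion.

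I do not expect a serious obstacle here, since the argument is bookkeeping rather than fresh computation. The point requiring care is simply to verify that the hypotheses of the cited lemmas are genuinely met: that $n>1$, so that $a_1$ really records $\Tr_n(\alpha^{-1})$ and so that Lemma \ref{meyn2} is applicable; that the non-self-reciprocity supplied by Lemma \ref{meyn2} is precisely what delivers the distinctness of $g_1$ and $g_2$; and that $g$, being a $Q$-transform, is self-reciprocal, as Lemma \ref{g_splits} requires.
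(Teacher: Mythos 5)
Your proof is correct and follows essentially the same route as the paper: Theorem \ref{var} together with Lemma \ref{meyn2} to split $f^Q$ into a reciprocal pair of irreducible factors, Lemma \ref{Q_n-1_1} to get $b_{2n-1}=b_1=1$, and then the coefficient comparison, which the paper does by "equating the coefficients" where you invoke Lemma \ref{g_splits} explicitly. Your added care about the case $n>1$ and the distinctness of $g_1,g_2$ merely makes explicit what the paper leaves implicit.
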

\begin{proof}
By Theorem \ref{var} the polynomial $f^Q$ is not irreducible. Hence it splits into the product of a pair of reciprocal irreducible polynomials, $g_1$ and $g_2$, of degree $n$. If $f^Q(x) = x^{2n}+ b_{2n-1} x^{2n-1} + \dots + b_1 x +b_0$, then, by Lemma \ref{Q_n-1_1}, $b_{2n-1} = b_1 = 1$. Since $f^Q(x) = g_1(x) \cdot g_2(x)$, equating the coefficients we get the thesis.
\end{proof}
\begin{remark}\label{thm_C_remark}
As a consequence of Theorem  \ref{thm_C}, if $f$ is a polynomial of type $(C,n)$, then one of the irreducible factors of $f^Q$, which has been called $g_1$, is of type $(B,n)$. Then, by Theorem \ref{thm_B}, $g_1^Q$ is a polynomial of type $(D,2n)$. Summing all up, starting from a polynomial $f$ of type $(C,n)$, we have constructed a polynomial of type $(D,2n)$.
\end{remark}

\begin{theorem}\label{thm_D}
Let $f$ be a polynomial of type $(D,n)$. Then, $f^Q$ splits into the product of a reciprocal pair of distinct irreducible polynomials $g_1, g_2$ which are both of type $(A,n)$ or both of type $(D,n)$. Moreover, at least one of them is the minimal polynomial of an element $\beta \in \F_{2^n}$ which is not $\theta$-periodic.
\end{theorem}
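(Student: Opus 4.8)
The plan is to treat the three assertions in order: the splitting of $f^Q$ into a reciprocal pair, the coincidence of their types, and finally the non-periodicity statement, which I expect to be the only delicate point.

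First I would establish the factorization. Since $f$ is of type $(D,n)$ its linear coefficient is $a_1 = 0$, so Theorem \ref{var} shows that $f^Q$ is not irreducible. Lemma \ref{meyn2} then forces $f^Q$ to be the product of a reciprocal pair of irreducible polynomials $g_1,g_2$ of degree $n$ that are not self-reciprocal. Writing $g_2 = g_1^*$, the failure of self-reciprocity gives $g_1 \neq g_1^* = g_2$, so the two factors are genuinely distinct, settling the first claim. For the types I would combine Lemma \ref{Q_n-1_1} with Lemma \ref{g_splits}: because $a_{n-1} = 0$ for a type $(D,n)$ polynomial, Lemma \ref{Q_n-1_1} gives $b_{2n-1} = b_1 = 0$ for $g = f^Q$, and then Lemma \ref{g_splits} applied to $f^Q = g_1 \cdot g_1^*$ in the case $b_{2n-1}=b_1=0$ yields that the coefficient of $x^{n-1}$ in $g_1$ equals its coefficient of $x$. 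Hence $g_1$ is of type $(A,n)$ or of type $(D,n)$; since passing to $g_2 = g_1^*$ merely interchanges those two coefficients, $g_2$ has the same type, so $g_1,g_2$ are both of type $(A,n)$ or both of type $(D,n)$.

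The substantive part is the non-periodicity, and here I would invoke the structure of $\Gr_n$. Let $\alpha \in \F_{2^n}$ be a root of $f$. As $f$ is of type $(D,n)$ we have $\Tr_n(\alpha) = \Tr_n(\alpha^{-1}) = 0$, so $\alpha \in A_n$ and its connected component in $\Gr_n$ carries the cycle-with-trees structure recalled in the Background. The preimages of $\alpha$ under $\theta$ are the roots of $\beta^2 + \alpha\beta + 1$, whose product is $1$; hence they are $\beta$ and $\beta^{-1}$ for some $\beta$, and by Lemma \ref{root_Q} both are roots of $f^Q$, so both lie in $\F_{2^n}$. They are distinct, since $\beta = \beta^{-1}$ would force $\beta = 1$ and $\alpha = \theta(1) = 0$, impossible for a root of an irreducible polynomial with nonzero constant term. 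It then remains to rule out that $\beta$ and $\beta^{-1}$ are both $\theta$-periodic: if they were, both would lie on the cycle of the component and both would map to $\alpha$ under $\theta$, but on a directed cycle each vertex has a unique predecessor lying on the cycle, so $\alpha$ could not have two distinct cyclic predecessors. Thus at least one of $\beta,\beta^{-1}$ is not $\theta$-periodic. Finally, each of $\beta,\beta^{-1}$ is a root of $g_1$ or of $g_2$, and since these factors are irreducible of degree $n$ each is the minimal polynomial of any of its roots; the factor having the non-periodic preimage as a root is therefore the minimal polynomial of an element $\beta \in \F_{2^n}$ that is not $\theta$-periodic, as required.

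The main obstacle I anticipate is precisely the cyclic-predecessor argument. One must be sure that $\alpha$ truly has two distinct preimages, which I would derive from the separability of $f^Q = g_1 g_2$ (the factors being distinct irreducibles), and one must use that being $\theta$-periodic is exactly lying on a cycle of $\Gr_n$, so that two periodic preimages of $\alpha$ would contradict the uniqueness of the cyclic predecessor guaranteed by the graph description. Everything else reduces to the coefficient bookkeeping already packaged in Lemmas \ref{g_splits} and \ref{Q_n-1_1}.
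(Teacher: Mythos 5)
Your proposal is correct and takes essentially the same route as the paper: Theorem \ref{var} together with Lemma \ref{meyn2} for the splitting into a distinct reciprocal pair, Lemmas \ref{Q_n-1_1} and \ref{g_splits} for the matching types, and the structure of $\Gr_n$ for the non-periodicity claim. The only cosmetic difference is in the last step, where the paper argues by cases ($\alpha$ periodic or not, using that one preimage of a periodic $\alpha$ lies at the first level of the tree rooted at $\alpha$) while you argue contrapositively via the uniqueness of the cycle predecessor of $\alpha$ --- the same structural fact about the functional graph of $\theta$.
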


\begin{proof}
We notice that the roots of $f$ belong to $A_n$, because $f$ is of type $(D,n)$. Moreover, by Theorem \ref{var} and Lemma \ref{meyn2}, $f^Q$ splits into the product of a reciprocal pair of distinct irreducible polynomials $g_1, g_2$. Since $f$ is of type $(D,n)$, by Lemma \ref{Q_n-1_1} and Lemma \ref{g_splits} the polynomials $g_1$ and $g_2$ are both of type $(D,n)$ or both of type $(A,n)$.
Consider a root $ \beta  \in \F_{2^n}$ of the polynomial $g_1$. Then, $\beta^{-1}$ is root of $g_2$ and $\alpha = \theta (\beta)$ is root of $f$.
If $\alpha$ is not $\theta$-periodic, then both $\beta$ and $\beta^{-1}$ are not $\theta$-periodic. On the contrary, if $\alpha$ is $\theta$-periodic, then one between $\beta$ and $\beta^{-1}$, say $\beta$, belongs to the first level of the tree rooted at $\alpha$. All considered, in any case $\beta$ is not $\theta$-periodic.
\end{proof}

\subsection{An iterative procedure.}\label{procedure}
Let $n'$ be a positive integer and $2^{l'}$, for some $l' \geq 0$, the greatest power of $2$ dividing $n'$.
The following iterative procedure takes as input a binary irreducible polynomial $p_0$ of type $(D,n')$ having a root $\alpha \in \F_{2^{n'}}$ which is not $\theta$-periodic and produces a finite sequence of polynomials $p_0, \dots, p_{s'}$, where $p_{s'}$ is of type $(A,n')$. 
After setting $i:=0$, we proceed as follows. 
\begin{enumerate}
\item Factor $p_i^Q$ into the product of two irreducible polynomials $g_1, g_2$ of degree $n'$.
\item Set $i:=i+1$ and $p_i := g_1$.
\item If $p_i$ is of type $(A,n')$, then break the iteration. Otherwise, since $p_i$ is of type $(D,n')$, go to step (1).
\end{enumerate}

The procedure breaks if and only if for some $i$ the polynomial $p_i$ is of type $(A,n')$. Actually, this is the case, as stated below.

\begin{theorem}\label{iteration_breaks}
Using the notations introduced above, there exists a positive integer $s' \leq l'+1$ such that $p_{s'}$ is a polynomial of type $(A,n')$. In addition, if $n'=2m'$ and $p_0 = g^Q$, for some polynomial of type $(B,m')$, then $s' \leq l'$.
\end{theorem}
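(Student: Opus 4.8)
The plan is to follow the single root-lineage produced by the procedure through the graph $\Gr_{n'}$ and to show that each application of the $Q$-transform moves this root exactly one level deeper into its tree, so that reaching a polynomial of type $(A,n')$ is the same as reaching a leaf. The first step is therefore to read off the type of an irreducible $f$ of degree $n'>1$ from the position of its root $\alpha$ in $\Gr_{n'}$. Since $a_{n'-1}=\Tr_{n'}(\alpha)$ and $a_1=\Tr_{n'}(\alpha^{-1})$, the root lies in $A_{n'}$ exactly when $a_{n'-1}=a_1$, i.e. for types $(A,n')$ and $(D,n')$. Moreover $\alpha$ has a $\theta$-preimage in $\F_{2^{n'}}$ if and only if $a_1=0$: by Theorem \ref{var} the equality $a_1=1$ is equivalent to $f^Q$ being irreducible, which forces the two preimages of $\alpha$ into $\F_{2^{2n'}}\setminus\F_{2^{n'}}$, whereas $a_1=0$ makes $f^Q$ split, by Lemma \ref{meyn2}, into two degree-$n'$ factors whose roots are precisely the two preimages of $\alpha$ in $\F_{2^{n'}}$ (Lemma \ref{root_Q}). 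Inside an $A_{n'}$-component this means that the leaves are exactly the roots of type $(A,n')$, while the cyclic vertices and all intermediate vertices are roots of type $(D,n')$.

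Next I would place the root $\alpha$ of $p_0$. As $p_0$ is of type $(D,n')$, $\alpha$ lies in $A_{n'}$ and is not a leaf, and by hypothesis it is not $\theta$-periodic, so it is off the cycle. By the description of the $A_{n'}$-components recalled in Section 2, the trees have depth $l'+2$, hence $\alpha$ sits at some level $k$ with $1\le k\le l'+1$. The procedure now just walks down the tree: by Theorem \ref{thm_D} the polynomial $p_i^Q$ splits into $g_1,g_2$ of degree $n'$ whose roots are the two $\theta$-preimages of the root of $p_i$, so both lie one level below it; putting $p_{i+1}:=g_1$ stays in the same tree and raises the level by one. As long as this level is $\le l'+1$ the new root is again of type $(D,n')$ and off the cycle, so Theorem \ref{thm_D} applies once more and the two admissible choices of $g_1$ are interchangeable, both having the same level. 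The walk stops exactly when the level reaches $l'+2$, i.e. when the root becomes a leaf, i.e. when $p_{s'}$ is of type $(A,n')$. Starting at level $k$ and gaining one level per step this happens after $s'=l'+2-k$ steps, and $1\le k\le l'+1$ yields $1\le s'\le l'+1$.

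For the sharper bound I only need $\alpha$ to sit at level $k\ge 2$, since then $s'=l'+2-k\le l'$. Assume $p_0=g^Q$ with $g$ of type $(B,m')$ and $n'=2m'$, and set $\gamma:=\theta(\alpha)$; by Lemma \ref{root_Q} $\gamma$ is a root of $g$, so $\alpha$ is a child of $\gamma$ in $\Gr_{n'}$ and it suffices to show $\gamma$ is not $\theta$-periodic (then $\gamma$ is at level $\ge 1$ and $\alpha$ at level $\ge 2$). Here $\gamma\in\F_{2^{m'}}$ and $\theta$ maps $\F_{2^{m'}}$ into itself, so the forward orbit of $\gamma$ never leaves $\F_{2^{m'}}$; consequently $\gamma$ is $\theta$-periodic in $\Gr_{n'}$ if and only if it is $\theta$-periodic in $\Gr_{m'}$. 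But $g$ is of type $(B,m')$, so its linear coefficient is $1$ and Theorem \ref{var} makes $g^Q$ irreducible, which means every $\theta$-preimage of $\gamma$ is a root of $g^Q$ and hence lies outside $\F_{2^{m'}}$; a periodic point always has its cyclic predecessor as a preimage in the same field, so $\gamma$ cannot be periodic in $\Gr_{m'}$, and therefore not in $\Gr_{n'}$.

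I expect the main difficulty to be organizational rather than computational: one must check that the dictionary ``type $(A,n')$ $=$ leaf at level $l'+2$, type $(D,n')$ $=$ non-leaf'' matches the depth-$(l'+2)$ tree structure imported from \cite{SU2} with no off-by-one error, and, for the refinement, that moving from $\Gr_{m'}$ up to $\Gr_{n'}$ neither creates nor destroys the periodicity of $\gamma$ -- which is exactly what the observation that the forward orbit stays in the subfield $\F_{2^{m'}}$ guarantees.
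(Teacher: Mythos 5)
Your proof is correct and takes essentially the same route as the paper's: both track the lineage of roots, showing that each $Q$-transform step moves the root one level deeper in the depth-$(l'+2)$ tree of its $A_{n'}$-component so that a type-$(A,n')$ polynomial (leaf) must be reached within $l'+1$ steps, and both obtain the refinement by showing $\theta(\alpha)$ is not $\theta$-periodic, forcing $\alpha$ to sit at level at least $2$. Your explicit leaf/non-leaf type dictionary and your subfield argument that the preimages of $\gamma$ lie outside $\F_{2^{m'}}$ (the paper instead observes directly that these preimages are $\alpha,\alpha^{-1}$, which are not periodic) are only minor presentational variants of the paper's induction.
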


\begin{proof}
The element $\alpha$ belongs to a positive level $j$ of the reversed binary tree rooted at some element of $A_{n'}$. We want to prove that, for any $0 \leq k \leq l'+2-j$, the polynomial $p_k$ is the minimal polynomial (of type $(A,n')$ or $(D,n')$) of an element $\gamma \in \F_{2^{n'}}$ such that $\theta^k (\gamma) = \alpha$. This is true if $k=0$. Therefore, consider $k < l'+2-j$ and suppose that the assertion is true for $p_k$. This means that $p_k$ is the minimal polynomial of an element $\gamma \in \F_{2^{n'}}$ such that $\theta^k (\gamma) = \alpha$. Since  $\gamma$ is not a leaf of the tree, then there exists an element $\gamma' \in \F_{2^{n'}}$ such that $\theta (\gamma') = \gamma$. By Lemma \ref{root_Q} the element $\gamma'$ is root of $p_k^Q$. Hence $p_k^Q$ splits into the product of a reciprocal pair of distinct irreducible polynomials and $p_{k+1}$ is equal to one of these two polynomials. Moreover, $\gamma'$ or $(\gamma')^{-1}$ is a root of $p_{k+1}$.

Finally, consider $p_{d}$, where $d= l'+2-j$. This polynomial has a root $\gamma \in \F_{2^{n'}}$ such that $\theta^d (\gamma) = \alpha$. We want to prove that $p_d$ is of type $(A,n')$. Suppose on the contrary that it is of type $(D,n')$. Then, $p_d^Q$ splits into the product of two irreducible polynomials of degree $n'$. One of these polynomials has a root $\gamma' \in \F_{2^{n'}}$ such that $\theta (\gamma') = \gamma$. Hence $\theta^{d+1} (\gamma') = \alpha$. Since $\alpha$ belongs to the level $j$ of the tree, then $\gamma'$ belongs to the level $l'+3$ of the tree. But this is not possible, because the tree has depth $l'+2$.
Hence, setting $s' := d \leq l+1$, the polynomial $p_{s'}$ is of type $(A,n')$. 

Suppose now that $n' =2 m'$ and $p_0 = g^Q$, for some polynomial $g$ of type $(B,m')$. Then, $p_0$ is self-reciprocal and $\beta = \theta (\alpha) = \theta (\alpha^{-1}) \in \F_{2^{m'}}$ is root of $g$. Moreover, $\beta$ is not $\theta$-periodic, since its only two preimages with respect to the map $\theta$ are $\alpha, \alpha^{-1}$, which are not $\theta$-periodic. If none of the polynomials $p_0, \dots, p_{l'}$ constructed with the previous iterative procedure is of type $(A,n')$ and $\gamma \in \F_{2^{n'}}$ is a root of $p_{l'}$, then $\alpha$ (root of $p_0$) belongs to some tree of $\Gr_{n'}$ of depth greater than  $l'+2$. Since this is an absurd, we deduce  that in this case $s' \leq l'$.  
\end{proof}

\subsection{Conclusions.}\label{conclusions}

Using the results of Subsection \ref{procedure}, if $f_0$ is any irreducible polynomial of degree $n$ over $\F_2$ and $2^l$ is the greatest power of $2$ dividing $n$, then we can construct an infinite sequence of binary irreducible polynomials. In particular, the initial segment $f_0, \dots, f_{s}$, with $s \leq l+3$, is constructed as explained below. 
\begin{itemize}
\item  If $f_0$ is of type $(A,n)$, then $s=0$.
\item  If $f_0$ is of type $(B,n)$, with $n >1$, then, by Theorem \ref{thm_B}, $f_0^Q$ is of type $(D,2n)$. Set $p_0 := f_0^Q, n':=2 n$ and $l' := l+1 $. Then, by Theorem \ref{iteration_breaks}, it is possible to construct a finite sequence $p_0, \dots, p_{s'}$, where $s' \leq l' = l+1$ and $p_{s'}$ is of type $(A,2n)$. Setting $f_i := p_{i-1}$, for $1 \leq i \leq s'+1 \leq l+2$, we are done.

If $f_0$ is of type $(B,1)$, then $f_0^Q (x) = x^2+1 = (x+1)^2$. In this case we set $f_1 (x) = x+1$ and notice that $f_1$ is of type $(A,1)$. Then, $s = 1$. 
\item If $f_0$ is of type $(C,n)$, then, proceeding as explained in Remark \ref{thm_C_remark}, it is possible to construct a polynomial $f_1$ of type $(B,n)$ and then another polynomial $f_2$ of type $(D,2n)$.  Set $p_0 := f_2, n':=2 n$ and $l' := l+1 $. Then, by Theorem \ref{iteration_breaks}, it is possible to construct a finite sequence $p_0, \dots, p_{s'}$, where $s' \leq l' = l+1$ and $p_{s'}$ is of type $(A,2n)$. Setting $f_i := p_{i-2}$, for $2 \leq i \leq s'+2 \leq l+3$, we are done.
\item If $f_0$ is of type $(D,n)$ and $f_0^Q$ does not split into the product of two polynomials of type $(A,n)$, then one of the two irreducible factors $h_1, h_2$ of $f_0^Q$ is a polynomial of type $(D,n)$ having a root $\alpha \in \F_{2^n}$ which is not $\theta$-periodic. A priori we do not know which of these two polynomials has such a root. Firstly, we set $n':=n$, $l':=l$ and $p_0 := h_1$. Iterating the procedure  described in the previous Subsection we construct a sequence of polynomials $p_0, p_1, \dots, p_{s'}$, where  $s' \leq l'+1$. If none of the polynomials $p_i$ is of type $(A, n)$, then we break the iterations,  set $p_0:= h_2$ and construct a new sequence. This new sequence $p_0, p_1, \dots, p_{s'}$ ends with a polynomial of type $(A,n)$, as proved in Theorem \ref{iteration_breaks}. Setting $f_i := p_{i-1}$ for $1 \leq i \leq s'+1 \leq l+2$ we are done.   
\end{itemize}
In all cases the polynomial $f_s$ is of type $(A,n)$ or $(A,2n)$. We can inductively construct all other terms of the sequence setting $f_{i+1} := f_i^Q$ for $i \geq s$. It is worth noting that for any $i$ the degree of $f_{i+1}$ is twice the degree of $f_i$. Moreover, if $n$ is odd, namely $l=0$, then $s \leq 3$.

\bibliography{Refs}
\end{document}